\newtheorem{theorem}{Theorem}[section]
\newtheorem{lemma}[theorem]{Lemma}
\theoremstyle{definition}
\newtheorem{definition}[theorem]{Definition}
\numberwithin{equation}{section}
\numberwithin{figure}{section}
\begin{document}

\title{Anonymity in Predicting the Future}
\author{Dvij Bajpai and Daniel J.\ Velleman}
\date{}

\begin{abstract}
Consider an arbitrary set $S$ and an arbitrary function $f : \mathbb{R} \to S$.  We think of the domain of $f$ as representing time, and for each $x \in \mathbb{R}$, we think of $f(x)$ as the state of some system at time $x$.  Imagine that, at each time $x$, there is an agent who can see $f \upharpoonright (-\infty, x)$ and is trying to guess $f(x)$---in other words, the agent is trying to guess the present state of the system from its past history.  In a 2008 paper, Christopher Hardin and Alan Taylor use the axiom of choice to construct a strategy that the agents can use to guarantee that, for every function $f$, all but countably many of them will guess correctly.  In a 2013 monograph they introduce the idea of anonymous guessing strategies, in which the agents can see the past but don't know where they are located in time.  In this paper we consider a number of variations on anonymity.  For instance, what if, in addition to not knowing where they are located in time, agents also do not know the rate at which time is progressing? What if they have no sense of how much time elapses between any two events? We show that in some cases agents can still guess successfully, while in others they perform very poorly.
\end{abstract}

\maketitle

\section{Introduction}

In \cite{HTpeculiar}, Christopher Hardin and Alan Taylor present a remarkable consequence of the axiom of choice.  Consider an arbitrary set $S$ and a function $f : \mathbb{R} \to S$.  We think of the domain of $f$ as representing time, and for each $x \in \mathbb{R}$, we think of $f(x)$ as the state of some system at time $x$.  The question Hardin and Taylor address is whether or not it is possible to predict the state of the system from its past behavior.  In other words, can we predict $f(x)$ from $f \upharpoonright (-\infty, x)$?  If there is no restriction on the function $f$, then it seems that $f \upharpoonright (-\infty, x)$ gives us no information about $f(x)$, and therefore it is hard to see how such a prediction could be possible.  Nevertheless, using the axiom of choice, Hardin and Taylor produce a prediction strategy with the property that for every function $f$, the prediction is correct for all but countably many values of $x$.

It may be helpful to imagine that at each time $x$ there is an agent who is making the prediction of $f(x)$.  Each agent can see the past but not the present or future, so the agent at $x$ knows $f \upharpoonright (-\infty, x)$ but no other values of $f$.  Hardin and Taylor make an extensive study of the predictions that can be made by agents in this and similar situations in their book \cite{HTcoord}.

One idea considered by Hardin and Taylor is the possibility that the agents can see the past but don't know what time it is---in other words, the agent at time $x$ knows the value of $f(x-c)$ for every $c > 0$, but doesn't know the value of $x$.  They define a prediction strategy to be \emph{anonymous} if it can be used by agents whose knowledge is restricted in this way.  And they show that even with this restriction on the knowledge of the agents, there is a prediction strategy that guarantees that for every function $f$, all but countably many agents are correct.

In this paper we consider several variations on the notion of anonymity.  What if the agents not only don't know what time it is, they also don't know the rate at which time passes?  What if each agent knows the order of past events, but nothing about the length of time between events? We will see that in some cases the agents are still able to perform well, with only countably many making incorrect predictions, while in other cases they do very badly, with only countably many making correct predictions.

\section{Definitions}

To state our results formally, we will need some definitions.  Let ${}^\mathbb{R}S$ denote the set of all functions from $\mathbb{R}$ to $S$.  Each such function represents a possible time evolution of the system under study.  Following Hardin and Taylor, we call such functions \emph{scenarios}.  Given a scenario $f$, the predictions made by all of the agents could be thought of as determining another scenario, which we will denote $P(f)$.  Thus, $P(f) : \mathbb{R} \to S$, and for any time $x \in \mathbb{R}$, $P(f)(x)$ denotes the prediction of $f(x)$ made by the agent at time $x$ under the scenario $f$.  Since the agent at $x$ can see only the past, scenarios $f$ and $g$ will be indistinguishable to him if $f \upharpoonright (-\infty, x) = g \upharpoonright (-\infty, x)$, and therefore his predictions in these scenarios should be the same.  We therefore make the following definition.

\begin{definition}
A function $P : {}^\mathbb{R}S \to {}^\mathbb{R}S$ is called a \emph{predictor} if it has the property that for all functions $f,g \in {}^\mathbb{R}S$ and all $x \in \mathbb{R}$, if $f \upharpoonright (-\infty, x) = g \upharpoonright (-\infty, x)$ then $P(f)(x) = P(g)(x)$.
\end{definition}

We say that the agent at $x$ \emph{guesses correctly} if his prediction of the value of $f(x)$ is correct---that is, if $P(f)(x) = f(x)$.  Otherwise, agent $x$ \emph{guesses incorrectly}.  We can now state Hardin and Taylor's main result from \cite{HTpeculiar}.

\begin{theorem}[Hardin and Taylor]\label{thm:HT}
There is a predictor $P$ such that for every scenario $f$, only countably many agents guess incorrectly; in other words, $\{x \in \mathbb{R} : P(f)(x) \ne f(x)\}$ is countable.
\end{theorem}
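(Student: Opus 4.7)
The plan is to use the axiom of choice to well-order ${}^\mathbb{R}S$, say as $\langle f_\alpha \rangle_{\alpha < \kappa}$, and then for each scenario $f$ and each $x \in \mathbb{R}$ define $P(f)(x) = f_{\alpha(x)}(x)$, where $\alpha(x)$ is the least ordinal $\alpha$ with $f_\alpha \upharpoonright (-\infty, x) = f \upharpoonright (-\infty, x)$. Such an ordinal exists because $f$ itself is a witness, and since $\alpha(x)$ depends only on $f \upharpoonright (-\infty, x)$, this $P$ is genuinely a predictor in the sense of the definition.

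Fix $f$ and let $E = \{x \in \mathbb{R} : P(f)(x) \ne f(x)\}$. The key step will be to show that $x \mapsto \alpha(x)$ is strictly increasing on $E$. If $x_1 < x_2$ both lie in $E$, then any $g$ that agrees with $f$ on $(-\infty, x_2)$ certainly agrees with $f$ on $(-\infty, x_1)$, so $\alpha(x_1) \le \alpha(x_2)$; equality cannot hold, for otherwise $f_{\alpha(x_2)}(x_1) = f(x_1)$, which would force $P(f)(x_1) = f(x_1)$ and contradict $x_1 \in E$. Thus $\alpha$ order-embeds $E$ into the ordinals, so $E$ is a well-ordered subset of $\mathbb{R}$, and any such subset is countable (pick a rational between each non-maximal element of $E$ and its successor in $E$).

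The main obstacle is this strict monotonicity step; it is the one place where the hypothesis $x \in E$ is essentially used, and it is what forces $E$ into a shape too rigid to accommodate uncountably many real numbers. Everything else, including the verification that $P$ is a predictor and the passage from ``well-ordered subset of $\mathbb{R}$'' to ``countable,'' is routine once the well-ordering has been fixed.
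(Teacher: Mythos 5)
Your proposal is correct and is essentially the paper's own argument (which follows Hardin--Taylor): well-order the scenarios, guess according to the least scenario consistent with the observed past, and show the error set embeds order-preservingly into the ordinals, hence is a well-ordered subset of $\mathbb{R}$ and therefore countable. The indexing by ordinals $\alpha(x)$ versus the paper's $\prec$-least element $g^f_x$ is only a notational difference.
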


The details of the proof can be found in \cite{HTpeculiar}, but the idea is simple.  Using the axiom of choice, we can let $\prec$ be a well ordering of ${}^\mathbb{R}S$.  Now for any $f \in {}^\mathbb{R}S$ and any $x \in \mathbb{R}$, let $[f]_x = \{g \in {}^\mathbb{R}S : f \upharpoonright (-\infty, x) = g \upharpoonright (-\infty, x)\}$, and let $g^f_x$ be the $\prec$-least element of $[f]_x$.  In other words, $g^f_x$ is the least scenario that is consistent with the information available to the agent at time $x$.  Let $P(f)(x) = g^f_x(x)$.  Hardin and Taylor show that $\{x \in \mathbb{R} : P(f)(x) \ne f(x)\}$ is well ordered (by the usual ordering on $\mathbb{R}$), and any well ordered subset of $\mathbb{R}$ is countable.  (To see why a well ordered set $W \subseteq \mathbb{R}$ must be countable, define a one-to-one function $h : W \to \mathbb{Q}$ by letting $h(x)$ be a rational number larger than $x$ but smaller than the next element of $W$, if there is one.)  Thus, with this predictor, only countably many agents guess incorrectly.

Next we present Hardin and Taylor's definition of anonymity.  Suppose that $f$ is a scenario, and define another scenario $g$ by the equation $g(x) = f(x+b)$, for some constant $b$.  In other words, $g = f \circ t^b$, where $t^b$ is the function $t^b(x) = x+b$.  We will call $t^b$ a \emph{shift function}.  We could think of $g$ as the scenario that represents precisely the same time evolution of the system as $f$, but shifted in time by $b$ units.  Now consider agents at times $x$ and $x+b$ who can see the past, but who don't know where they are located in time.  For any $c>0$, we have
\[
g(x-c) = f(x-c+b) = f(x+b-c).
\]
This means that the past from the point of view of the agent at $x$ in scenario $g$ looks exactly the same as the past from the point of view of the agent at $x+b$ in scenario $f$.  We would therefore expect these agents to make the same predictions.  In other words, we would expect $P(g)(x) = P(f)(x+b)$, or equivalently $P(f \circ t^b)(x) = (P(f) \circ t^b)(x)$.  This motivates Hardin and Taylor's definition of anonymity.

\begin{definition}
A predictor $P$ is \emph{anonymous} if for every scenario $f$ and every real number $b$,
\[
P(f \circ t^b) = P(f) \circ t^b.
\]
\end{definition}

Anonymity of a predictor can be thought of as a sort of invariance under shifts.  This suggests a natural generalization of the notion of anonymity (see \cite[p.\ 82]{HTcoord}).

\begin{definition}
Let $T$ be a family of functions from $\mathbb{R}$ to $\mathbb{R}$.  A predictor $P$ is \emph{$T$-anonymous} if for every scenario $f$ and every $t \in T$,
\[
P(f \circ t) = P(f) \circ t.
\]
\end{definition}

Thus, anonymity is just $T_1$-anonymity, where $T_1$ is the family of shift functions.  But there are other families of functions that we might use instead:
\begin{align*}
T_2 &= \{t : t \text{ is a linear function with positive slope}\},\\
T_3 &= \{t : t \text{ is an infinitely differentiable strictly increasing bijection from $\mathbb{R}$ to $\mathbb{R}$}\},\\
T_4 &= \{t : t \text{ is a strictly increasing bijection from $\mathbb{R}$ to $\mathbb{R}$}\}.
\end{align*}
If $f$ is a scenario and $t \in T_2$, then $f \circ t$ is a scenario that is the same as $f$, except that events have been shifted in time, and also the rate at which events happen has been changed.  Thus, a $T_2$-anonymous predictor is one that can be used by agents who not only don't know what time it is, they also don't know how fast time passes.  Similarly, $T_3$- and $T_4$-anonymous predictors are predictors that can be used by agents who are insensitive to more extreme distortions of time.  Hardin and Taylor ask in \cite{HTcoord} how successful a $T_4$-anonymous predictor can be (see Question 7.8.3 on p.\ 82).

Note that $T_1 \subseteq T_2 \subseteq T_3 \subseteq T_4$, and therefore the requirement of $T_i$-anonymity becomes stronger as $i$ increases.  We will show that there is a $T_2$-anonymous predictor that always performs well (see Theorem \ref{thm:main1}), but there is a scenario in which all $T_3$-anonymous predictors perform badly (see Theorem \ref{thm:main2}).

\section{$T_2$-anonymity}

We need a few definitions and lemmas before we can prove our main result about $T_2$-anonymous predictors.  Recall that a scenario $f$ is said to be \emph{periodic} if there is a number $b \ne 0$ such that for all $x$, $f(x) = f(x+b)$; $b$ is called a \emph{period} of $f$.  Equivalently, $f$ is periodic with period $b$ if $f = f \circ t^b$.  We will need to generalize this concept to functions in $T_2$.

\begin{definition}
We will say that a scenario $f$ is \emph{affine-invariant} if there is some $t \in T_2$ such that $t$ is not the identity function and $f = f \circ t$.
\end{definition}

Since our agents can see only the past, it will also be convenient to have names for scenarios that are periodic or affine-invariant up to some point in time.

\begin{definition}
Suppose that $x$ is a real number and $f$ is a scenario.  If there is some $b \ne 0$ such that $f \upharpoonright (-\infty, x) = (f \circ t^b) \upharpoonright (-\infty, x)$, then we say that $f$ is \emph{past-periodic on $(-\infty, x)$ with period $b$}.  Similarly, if there is some nonidentity function $t \in T_2$ such that $f \upharpoonright (-\infty, x) = (f \circ t) \upharpoonright (-\infty, x)$, then we say that $f$ is \emph{past-affine-invariant on $(-\infty, x)$}.
\end{definition}

Clearly, if a scenario is periodic or affine-invariant, then it is also past-periodic or past-affine-invariant.  In the other direction, if a scenario is past-periodic or past-affine-invariant, then it can be modified to make it periodic or affine-invariant, as our next two lemmas show.

\begin{lemma}\label{lem:perext}
Suppose the scenario $f$ is past-periodic on $(-\infty, x)$. Then there is a periodic scenario $g$ with the property that $f \upharpoonright (-\infty, x) = g \upharpoonright (-\infty, x)$.
\end{lemma}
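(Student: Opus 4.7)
The plan is to construct the desired periodic scenario $g$ by copying $f$ on a fundamental domain of length $|b|$ lying in the past of $x$, and then extending by periodicity to all of $\mathbb{R}$.

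Let $b \neq 0$ witness the past-periodicity of $f$ on $(-\infty, x)$, and set $b^\ast = |b|$. The interval $I = [x - b^\ast, x)$ is contained in $(-\infty, x)$ and has length $b^\ast$. For each $y \in \mathbb{R}$ there is a unique integer $k_y$ with $y - k_y b^\ast \in I$, and I would define $g(y) = f(y - k_y b^\ast)$. By construction $g(y + b^\ast) = g(y)$ for all $y$, so $g$ is periodic with nonzero period $b^\ast$.

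The substance of the proof is verifying that $g$ agrees with $f$ on $(-\infty, x)$. For $y \in I$ this is immediate from the definition, so fix $y < x - b^\ast$ and write $y_0 = y + m b^\ast \in I$ for a positive integer $m$; then $g(y) = f(y_0)$, so the task reduces to showing $f(y) = f(y_0)$. I would establish this by telescoping along the chain $y, \, y + b^\ast, \, y + 2 b^\ast, \, \ldots, \, y + m b^\ast = y_0$. When $b > 0$, the past-period hypothesis says $f(z) = f(z + b^\ast)$ for every $z < x$, and each step $z = y + j b^\ast$ with $0 \leq j < m$ satisfies $z \leq y_0 - b^\ast < x$. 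When $b < 0$, the hypothesis rearranges (substituting $z \mapsto z + b^\ast$) to $f(z + b^\ast) = f(z)$ valid whenever $z + b^\ast < x$, and each step again satisfies $z + b^\ast \leq y_0 < x$. Either way the telescope collapses to $f(y) = f(y_0) = g(y)$.

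The only real obstacle is keeping track of $\mathrm{sgn}(b)$: the given past-period identity advances forward or backward on the real line according to the sign of $b$, so one must either split into cases or, as here, rewrite the negative case by a substitution so that in both signs the iteration is a forward shift of $b^\ast$ whose validity condition reduces to the chain of points all lying below $x$. Once $I$ is placed as $[x - |b|, x)$, the two signs are handled essentially uniformly.
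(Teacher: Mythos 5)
Your proof is correct and is essentially the paper's argument in different clothing: the paper defines $g$ by making it constant on the equivalence classes of $y \sim_b z \Leftrightarrow y+nb=z$, which is the same as your fundamental-domain construction on $[x-|b|,x)$ extended by periodicity. Your explicit telescoping step just spells out the detail the paper leaves implicit (that $f$ is constant on each class's intersection with $(-\infty,x)$), so no further changes are needed.
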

\begin{proof}
Choose $b \ne 0$ so that $f$ is past-periodic on $(-\infty, x)$ with period $b$, and define an equivalence relation $\sim_b$ on $\mathbb{R}$ by 
\begin{equation*}
y \sim_b z \Leftrightarrow \text{for some integer $n$, } y + nb = z.
\end{equation*}
Let $[y]_b$ denote the equivalence class of $y$ under $\sim_b$.  Since $f$ is past-periodic on $(-\infty, x)$ with period $b$, it is constant on $[y]_b \cap (-\infty, x)$, for every real number $y$.  To define $g$, we simply let the value of $g$ on any equivalence class $[y]_b$ be the constant value of $f$ on $[y]_b \cap (-\infty, x)$.  It is easy to see that $g$ agrees with $f$ on $(-\infty, x)$ and $g$ is periodic.
\end{proof}

\begin{lemma}\label{lem:afext}
Suppose the scenario $f$ is past-affine-invariant on $(-\infty, x)$. Then there is a scenario $g$ that is affine-invariant and has the property that $f \upharpoonright (-\infty, x) = g \upharpoonright (-\infty, x)$. 
\end{lemma}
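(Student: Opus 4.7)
Plan. The idea is to imitate the proof of Lemma~\ref{lem:perext}: pick an appropriate equivalence relation on $\mathbb{R}$ whose classes are invariant under the witnessing $t \in T_2$, check that $f$ is constant on each class's intersection with $(-\infty, x)$, and then define $g$ to be constant on each class. What is new, compared to the periodic case, is that affine maps can have a fixed point, so the orbit structure of $\langle t \rangle$ is slightly richer.

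Fix $t \in T_2$ with $t \ne \mathrm{id}$ and $f \upharpoonright (-\infty, x) = (f \circ t) \upharpoonright (-\infty, x)$, and write $t(y) = ay + b$ with $a > 0$. If $a = 1$, then $b \ne 0$ and $f$ is past-periodic with period $b$, so Lemma~\ref{lem:perext} produces a periodic scenario $g$ agreeing with $f$ on $(-\infty, x)$; this $g$ satisfies $g \circ t = g$ with $t \in T_2 \setminus \{\mathrm{id}\}$, hence is affine-invariant. Assume from now on that $a \ne 1$, and let $y_0 = b/(1-a)$ be the unique fixed point of $t$, so that $t^n(y) = y_0 + a^n(y - y_0)$ for every integer $n$.

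The orbits of $\langle t \rangle$ partition $\mathbb{R}$ into $\{y_0\}$ together with the sets $\{y_0 + a^n c : n \in \mathbb{Z}\}$ indexed by $c \ne 0$. For each orbit $O$, define $g$ on all of $O$ as follows: if $O \cap (-\infty, x) \ne \emptyset$, let $g$ take the common value of $f$ on this intersection throughout $O$; otherwise, let $g$ take some fixed value $s_0 \in S$ throughout $O$. By construction $g$ is constant on every orbit, so $g \circ t = g$, proving $g$ is affine-invariant (witnessed by $t$). Also by construction $g$ agrees with $f$ on $(-\infty, x)$, provided that the common value in the first clause is well defined.

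The only substantive thing to check is therefore that $f$ really is constant on $O \cap (-\infty, x)$. The key observation is that $t$ is strictly increasing on $\mathbb{R}$ (as $a > 0$), so for any $z$ in a nontrivial orbit $O$, the orbit points $t^n(z)$ are strictly monotone in $n$ (increasing if $t(z) > z$ and decreasing otherwise). Consequently, given two orbit points $z_1 < z_2$ in $O \cap (-\infty, x)$, every orbit point between $z_1$ and $z_2$ (in the $\mathbb{R}$-order on $O$) also lies in $[z_1, z_2] \subseteq (-\infty, x)$, so past-affine-invariance $f(w) = f(t(w))$ for $w < x$ chains through this finite sub-orbit and forces $f(z_1) = f(z_2)$. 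I expect this monotonicity-and-chaining step to be the only real bit of care; the rest is the same bookkeeping as in Lemma~\ref{lem:perext}.
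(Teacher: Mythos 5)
Your proof is correct and follows essentially the same route as the paper: pass to the orbit equivalence relation of the group generated by $t$, observe that $f$ is constant on each orbit's intersection with $(-\infty,x)$, and define $g$ to be constant on orbits. You simply supply more detail than the paper does (the case split on the fixed point and the monotonicity-and-chaining verification, which is indeed the one point requiring care and is argued correctly).
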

\begin{proof}
Since $f$ is past-affine-invariant on $(-\infty, x)$, there is some nonidentity function $t \in T_2$ such that $f \upharpoonright (-\infty, x) = (f \circ t) \upharpoonright (-\infty, x)$.  For any positive integer $n$, let $t^n$ denote the $n$-fold composition of $t$ with itself.  Similarly, let $t^{-n}$ denote the $n$-fold composition of $t^{-1}$ with itself, and let $t^0$ be the identity function.  Now if we define $y \sim_t z$ to mean that there is some integer $n$ such that $t^n(y) = z$, then $\sim_t$ is an equivalence relation on $\mathbb{R}$.  The rest of the proof is similar to the proof of Lemma \ref{lem:perext}: $f$ is constant on the intersection of each equivalence class with $(-\infty, x)$, and we can define $g$ to agree with $f$ on $(-\infty, x)$ and to be constant on all equivalence classes.
\end{proof}

We will also need to know that the periods in periodicity and past-periodicity of a function always match.

\begin{lemma}\label{lem:permatch}
Suppose $f$ is past-periodic on $(-\infty, x)$ with period $b$.  If $f$ is also periodic, then it has period $b$.
\end{lemma}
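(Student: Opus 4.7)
The plan is to use the global periodicity to translate any point into the past window $(-\infty, x)$, apply past-periodicity there, and then translate back.

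Suppose $f$ is periodic with some period $b'\ne 0$. I want to show that for every $y\in\mathbb{R}$, $f(y)=f(y+b)$. Fix such a $y$. Since $b'\ne 0$, the integers $n$ with $nb' < \min(x-y,\; x-y-b)$ form an unbounded set (towards $-\infty$ if $b'>0$, towards $+\infty$ if $b'<0$); pick such an integer $n$, so that both $y+nb'<x$ and $y+b+nb'<x$. Then by past-periodicity on $(-\infty,x)$ with period $b$, applied at the point $y+nb'$ (which lies in $(-\infty,x)$),
\[
f(y+nb') = f(y+nb'+b) = f(y+b+nb').
\]
Meanwhile, by ordinary periodicity with period $b'$, $f(y) = f(y+nb')$ and $f(y+b) = f(y+b+nb')$. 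Chaining these three equalities gives $f(y)=f(y+b)$, as desired.

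Since there is no real obstacle here beyond choosing $n$ with the right sign, I do not expect any serious difficulty; the lemma is essentially a routine translation argument, and its only content is that past-periodicity together with \emph{any} period is enough to promote the past-period to a genuine period.
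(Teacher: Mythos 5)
Your proof is correct and follows essentially the same argument as the paper: use the global period to translate the point of interest into $(-\infty,x)$, apply past-periodicity there, and translate back. The only difference is cosmetic (you also arrange $y+b+nb'<x$, which is not needed, and you spell out the choice of sign for $n$ that the paper leaves implicit).
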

\begin{proof}
Suppose $f$ is periodic with period $c$.  To see that $b$ is also a period of $f$, consider any real number $y$.  Choose an integer $n$ such that $y + nc$ is less than $x$.  Then
\begin{align*}
f(y) &= f(y + nc) & & \text{(since $f$ has period $c$)}\\
&= f(y + nc + b) & & \text{(since $f$ is past-periodic on $(-\infty, x)$ with period $b$)}\\
&= f(y + b) & & \text{(since $f$ has period $c$).} \qedhere
\end{align*}
\end{proof}

We are now ready to prove that there is a $T_2$-anonymous predictor that performs very well.

\begin{theorem}\label{thm:main1}
There is a $T_2$-anonymous predictor $P$ such that for every scenario $f$, $\{x \in \mathbb{R} : P(f)(x) \ne f(x)\}$ is countable.  In other words, only countably many agents guess incorrectly.
\end{theorem}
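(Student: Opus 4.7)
The plan is to adapt the Hardin--Taylor construction of Theorem \ref{thm:HT}, carrying out the choices on the quotient ${}^\mathbb{R}S/T_2$ so that the resulting predictor is $T_2$-equivariant.  Using the axiom of choice, fix a well-ordering $\prec$ of the set of $T_2$-orbits of scenarios.  For each pair $(f,x)$, let $\mathcal{O}(f,x)$ be the $\prec$-least orbit that contains a scenario agreeing with $f$ on $(-\infty, x)$; when $f$ is past-affine-invariant on $(-\infty, x)$, I would strengthen this to require the witness to be affine-invariant (which is legitimate by Lemma \ref{lem:afext}).  Choose a canonical witness $g^f_x \in \mathcal{O}(f,x)$ (affine-invariant whenever possible) and set $P(f)(x) := g^f_x(x)$.

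The central step is to show $g^f_x$ is unambiguously defined.  Any two candidates $g_1, g_2 \in \mathcal{O}(f,x)$ are related by $g_1 = g_2 \circ t$ for some $t \in T_2$.  After replacing $t$ by $t^{-1}$ if necessary, we may assume $t$ maps $(-\infty, x)$ into itself, so that $t(y) < x$ for every $y < x$; the common agreement $g_1 = g_2 = f$ on $(-\infty, x)$ then yields $f(t(y)) = f(y)$ for all $y < x$, forcing either $t = \mathrm{id}$ or $f$ to be past-affine-invariant on $(-\infty, x)$.  This gives uniqueness in the non-past-affine-invariant case.  In the past-affine-invariant case, the requirement that our witnesses be affine-invariant reduces uniqueness to a $T_2$-analog of Lemma \ref{lem:permatch}: \emph{if $g$ is affine-invariant and $g \circ t$ agrees with $g$ on $(-\infty, x)$ for some non-identity $t \in T_2$, then $g \circ t = g$ on all of $\mathbb{R}$}.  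Proving this analog---bootstrapping $t$-invariance from the past to all of $\mathbb{R}$ using the existing affine symmetry of $g$ inside the non-abelian group $T_2$---is what I expect to be the main technical obstacle.

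Granted uniqueness, $T_2$-anonymity follows from a clean transfer principle.  For any $s \in T_2$, the map $g \mapsto g \circ s$ is a bijection between scenarios agreeing with $f$ on $(-\infty, s(x))$ and scenarios agreeing with $f \circ s$ on $(-\infty, x)$; it preserves $T_2$-orbits, affine-invariance, and the past-affine-invariance distinction.  Hence $\mathcal{O}(f, s(x)) = \mathcal{O}(f \circ s, x)$ and uniqueness gives $g^{f \circ s}_x = g^f_{s(x)} \circ s$, so $P(f \circ s)(x) = g^f_{s(x)}(s(x)) = P(f)(s(x))$.

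For the error bound, observe that the set $A = \{x \in \mathbb{R} : f \text{ is past-affine-invariant on } (-\infty, x)\}$ is downward-closed, so $A$ and $\mathbb{R} \setminus A$ are intervals.  Within each, the family of orbits meeting the agreement condition only shrinks as $x$ grows, so $x \mapsto \mathcal{O}(f, x)$ is $\prec$-non-decreasing.  At a bad point $x_0$, $g^f_{x_0}(x_0) \ne f(x_0)$, so by uniqueness no candidate witness in $\mathcal{O}(f, x_0)$ can agree with $f$ on $(-\infty, x_0 + \varepsilon)$, forcing $\mathcal{O}(f, x_0) \prec \mathcal{O}(f, x_0 + \varepsilon)$ for $\varepsilon$ small enough to keep $x_0 + \varepsilon$ in the same case.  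Hence the bad set in each case injects order-preservingly into the well-ordered set of $T_2$-orbits, so it is well-ordered in $\mathbb{R}$ and countable, as in the proof of Theorem \ref{thm:HT}; the total bad set is a union of two countable sets, hence countable.
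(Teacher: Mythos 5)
Your overall architecture parallels the paper's (a well-ordering that privileges affine-invariant scenarios, a least representative consistent with the past up to $T_2$, and monotonicity of that representative for the countability step), and your reduction of well-definedness to the past-affine-invariant case is correct. But the deferred key lemma---``if $g$ is affine-invariant and $g \circ t$ agrees with $g$ on $(-\infty,x)$ for some non-identity $t \in T_2$, then $g \circ t = g$ on all of $\mathbb{R}$''---is false, and so is the weaker statement your construction actually needs, namely that two affine-invariant members of one orbit that agree with $f$ on $(-\infty,x)$ must agree at $x$. Counterexample: let $g(0)=1$ and $g(y)=0$ for $y \ne 0$. Then $g = g \circ s$ for the non-identity map $s(y)=y/2 \in T_2$, so $g$ is affine-invariant; taking $t(y)=y+1$, the scenarios $g$ and $g \circ t$ agree on $(-\infty,-1)$ (both vanish there), yet $(g\circ t)(-1) = g(0) = 1 \ne 0 = g(-1)$, so $g \circ t \ne g$ and the two candidate witnesses even disagree at the boundary point. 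Since affine-invariance is a property of the whole $T_2$-orbit (if $g = g\circ s$ then $g\circ t = (g\circ t)\circ (t^{-1}\! s\, t)$), both $g$ and $g\circ t$ are admissible affine-invariant witnesses in the same orbit for $f = g$ at $x = -1$, and nothing in an arbitrary well-ordering of orbits prevents that orbit from being $\prec$-least. So the uniqueness on which your anonymity step ($g^{f\circ s}_x = g^f_{s(x)}\circ s$) and your countability step both rely genuinely fails in your setup; you have correctly located the main obstacle, but the proposed route through it does not work.

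What the paper does instead is precisely the extra device your sketch lacks: the well-ordering lists \emph{periodic} scenarios first, then the remaining affine-invariant ones. In the hard case, where the affine symmetry $s$ of the least scenario and the map $\bar{t}$ relating the two witnesses do not commute, the commutator $s^{-1}\bar{t} s \bar{t}^{-1}$ is a nontrivial shift, which shows the least scenario is past-periodic; Lemma \ref{lem:perext} together with $\prec$-minimality (periodic scenarios first) upgrades this to genuine periodicity, Lemma \ref{lem:permatch} pins down the period, and only then does a direct computation give equality of the two candidate values at the relevant point---the conclusion is equality at that point, not $g\circ t = g$ everywhere. Note that in the counterexample above $g$ is past-periodic on $(-\infty,0)$ (it is constant there) but not periodic, so it is exactly the kind of scenario that the paper's periodic-first minimality excludes from arising as the least witness and that your construction does not exclude. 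A smaller issue: your appeal to Lemma \ref{lem:afext} to ``require the witness to be affine-invariant'' is not automatic, since the $\prec$-least orbit meeting the past may contain no affine-invariant scenario at all (affine-invariance is orbit-wide), so that requirement silently changes which orbit is selected; any repair must settle this and re-derive anonymity and monotonicity accordingly.
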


\begin{proof}
The predictor we use is similar to the one described in \cite{HTpeculiar}. Using the axiom of choice, fix a well ordering $\prec$ of $^{\mathbb{R}}S$ that lists periodic scenarios first, then other affine-invariant scenarios, and then the rest of the scenarios. Suppose $f$ is an arbitrary scenario and $x$ is an arbitrary agent. Let $[f]_x = \{ g \in {}^{\mathbb{R}} S : f \upharpoonright (-\infty, x) = (g \circ t) \upharpoonright (-\infty, x) \text{ for some $t \in T_2$}\}$. Let $g^f_x$ denote the $\prec$-least element of $[f]_x$.  To define $P(f)(x)$, choose any $t \in T_2$ for which $f \upharpoonright (-\infty, x) = (g^f_x \circ t) \upharpoonright (-\infty, x)$, and let
\begin{equation}\label{eq:defP}
P(f)(x) = (g^f_x \circ t) (x).
\end{equation}

We first need to show that $P$ is well defined---that is, that the value of $P(f)(x)$ doesn't depend on the choice of $t$. Suppose that for some scenario $f$ and some agent $x$, there are two distinct functions $t_1, t_2 \in T_2$ such that
\begin{equation}\label{eq:first}
f \upharpoonright (-\infty, x) = (g^f_x \circ t_1) \upharpoonright (-\infty, x) = (g^f_x \circ t_2) \upharpoonright(-\infty, x).
\end{equation}
In order to prove that $P(f)(x)$ is well defined, we need to show that $(g^f_x \circ t_1) (x) = (g^f_x \circ t_2) (x)$.  In other words, letting $x_1 = t_1(x)$ and $x_2 = t_2(x)$, we must show that $g^f_x(x_1) = g^f_x(x_2)$.  Of course, if $x_1 = x_2$ then this clearly holds, so we may assume that $x_1 \ne x_2$.

Let $\bar{t} = t_2 \circ t_1 ^{-1}$.  Then $\bar{t}(x_1) = x_2$, and it follows from \eqref{eq:first} that
\begin{equation}\label{eq:x1}
g^f _x \upharpoonright (-\infty, x_1) = (g^f _x \circ \bar{t}) \upharpoonright (-\infty, x_1)
\end{equation}
and
\begin{equation}\label{eq:x2}
g^f _x \upharpoonright (-\infty, x_2) = (g^f _x \circ \bar{t}^{-1}) \upharpoonright (-\infty, x_2).
\end{equation}
In particular, \eqref{eq:x1} reveals that $g^f_x$ is past-affine-invariant on $(-\infty, x_1)$, so Lemma \ref{lem:afext} tells us that there is an affine-invariant function $g$ that agrees with $g^f_x$ on $(-\infty, x_1)$.  Therefore $g \circ t_1$ agrees with $f$ on $(-\infty, x)$, so $g$ is an element of $[f]_x$. Since $g^f_x$ is the $\prec$-least element of $[f]_x$, it follows that $g^f_x \preccurlyeq g$. Since $\prec$ lists affine-invariant scenarios first, and $g$ is affine-invariant, we deduce that $g^f _x$ is affine-invariant as well. So we may fix an $s$ in $T_2$ with the property that $s$ is not the identity function and
\begin{equation}\label{eq:s}
g^f _x = g^f _x \circ s.
\end{equation}
An immediate consequence of this is that $g^f _x = g^f _x \circ s^{-1}$.  By replacing $s$ with $s^{-1}$ if necessary, we may assume that $s(x_1) \le x_1$.

To prove that $g^f_x(x_1) = g^f_x(x_2)$, we now consider two cases.

Case 1: $s$ and $\bar{t}$ commute.  If $s(x_1) = x_1$, then
\[
s(x_2) = s(\bar{t}(x_1)) = \bar{t}(s(x_1)) = \bar{t}(x_1) = x_2.
\]
Since $x_1 \neq x_2$, this means that the linear function $s$ has two distinct fixed points---a contradiction, since $s$ is not the identity function. Hence $s(x_1) \neq x_1$, and since we have $s(x_1) \le x_1$, we conclude that $s(x_1) < x_1$.  Therefore
\begin{align*}
g^f_x(x_1) &= (g^f_x \circ s) (x_1)= g^f_x (s(x_1)) & & \text{(by \eqref{eq:s})}\\
&= (g^f_x \circ \bar{t}) (s(x_1)) = g^f_x(\bar{t}(s(x_1))) & & \text{(by \eqref{eq:x1}, since $s(x_1) < x_1$)}\\
&= g^f_x (s(\bar{t}(x_1))) & & \text{(since $s$ and $\bar{t}$ commute)}\\
&= (g^f_x \circ s)(x_2) & & \text{(since $\bar{t}(x_1) = x_2$)}\\
&= g^f_x(x_2) & & \text{(by \eqref{eq:s})},
\end{align*}
as required.

Case 2: $s$ and $\bar{t}$ do not commute.  Consider any $y < x_2$. In what follows, we abbreviate expressions like $s(\bar{t}^{-1}(y))$ to $s\bar{t}^{-1}(y)$ to prevent parentheses from stacking up. Note that $y < x_2$ implies that $\bar{t}^{-1}(y) < \bar{t}^{-1}(x_2) = x_1$, and therefore $s \bar{t}^{-1}(y) < s(x_1) \le x_1$.  Thus,
\begin{equation}\label{eq:commutator}
\begin{aligned}
g^f_x (y) &= (g^f_x \circ \bar{t}^{-1}) (y) = g^f_x (\bar{t}^{-1}(y)) & &\text{(by \eqref{eq:x2}, since $y <x_2$)} \\
&= (g^f_x \circ s) (\bar{t}^{-1}(y)) = g^f_x (s \bar{t}^{-1}(y)) & &\text{(by \eqref{eq:s})} \\
&= (g^f_x \circ \bar{t})(s \bar{t}^{-1}(y)) = g^f_x ( \bar{t}s \bar{t}^{-1}(y)) & &\text{(by \eqref{eq:x1}, since $s\bar{t}^{-1}(y) <x_1$)} \\
&= (g^f_x \circ s^{-1})( \bar{t}s\bar{t}^{-1}(y)) & &\text{(by \eqref{eq:s})}\\
&= g^f_x(s^{-1}\bar{t}s \bar{t}^{-1}(y)).
\end{aligned}
\end{equation}

We let the reader verify that $s^{-1}\bar{t}s \bar{t}^{-1}$ is a shift function, and since $s$ and $\bar{t}$ don't commute, it is not the identity function. Therefore we can fix $b \ne 0$ so that $s^{-1}\bar{t}s \bar{t}^{-1} = t^b$.  Equation \eqref{eq:commutator} now tells us that $g^f_x \upharpoonright (-\infty, x_2) = (g^f_x \circ t^b) \upharpoonright (-\infty, x_2)$; in other words, $g^f_x$ is past-periodic on $(-\infty, x_2)$ with period $b$. By Lemma \ref{lem:perext} we know that there is a periodic scenario $g$ that agrees with $g^f_x$ on $(-\infty, x_2)$. Fix such a $g$. Then $g \circ t_2$ agrees with $f$ on $(-\infty, x)$, and therefore $g$ is an element of $[f]_x$. Hence, $g^f_x \preccurlyeq g$. Since $g$ is periodic, and $\prec$ lists periodic scenarios first, $g^f_x$ must also be periodic. By Lemma \ref{lem:permatch}, $g^f_x$ has period $b$. Putting this all together, we have shown that
\begin{equation}\label{eq:tb}
g^f_x = g^f_x \circ t^b.
\end{equation}

Replacing $b$ with $-b$ if necessary, we may assume that $b < 0$, so that
$t^b(y) < y$ for all $y$ in $\mathbb{R}$.  Therefore for every $y < x_1$,
$t^b \bar{t}(y) < \bar{t}(y) < \bar{t}(x_1) = x_2$, so
\begin{equation}\label{eq:t-1tbt}
\begin{aligned}
g^f_x(y) &= (g^f_x \circ \bar{t})(y) = g^f_x(\bar{t}(y)) & & \text{(by
\eqref{eq:x1}, since $y < x_1$)}\\
&= (g^f_x \circ t^b)(\bar{t}(y)) = g^f_x(t^b \bar{t}(y)) & & \text{(by
\eqref{eq:tb})}\\
&= (g^f_x \circ \bar{t}^{-1})(t^b \bar{t}(y)) = g^f_x(\bar{t}^{-1} t^b
\bar{t} (y)) & & \text{(by \eqref{eq:x2}, since $t^b \bar{t}(y) < x_2$).}
\end{aligned}
\end{equation}
It is easy to verify that $\bar{t}^{-1} t^b \bar{t}$ is also a nonidentity
shift function, so we can choose some $c \ne 0$ such that $\bar{t}^{-1}
t^b \bar{t} = t^c$.  Thus, equation \eqref{eq:t-1tbt} says that $g^f_x
\upharpoonright (-\infty, x_1) = (g^f_x \circ t^c) \upharpoonright
(-\infty, x_1)$.  In other words, $g^f_x$ is past-periodic on $(-\infty,
x_1)$ with period $c$.  As before, we can use Lemma \ref{lem:permatch} to
deduce that
\begin{equation}\label{eq:tc}
g^f_x = g^f_x \circ t^c.
\end{equation}

To complete our proof that $P$ is well defined, we can now compute
\begin{align*}
g^f_x(x_2) &= (g^f_x \circ t^b)(x_2) = g^f_x(t^b(x_2)) & & \text{(by
\eqref{eq:tb})}\\
&= (g^f_x \circ \bar{t}^{-1})(t^b(x_2)) = g^f_x(\bar{t}^{-1} t^b(x_2)) & &
\text{(by \eqref{eq:x2}, since $t^b(x_2) < x_2$)}\\
&= g^f_x(\bar{t}^{-1} t^b \bar{t}(x_1)) & & \text{(since $x_2 =
\bar{t}(x_1)$)}\\
&= (g^f_x \circ t^c)(x_1) & & \text{(since $\bar{t}^{-1} t^b \bar{t} =
t^c$)}\\
&= g^f_x(x_1) & & \text{(by \eqref{eq:tc}).}
\end{align*}

We must now show that $P$ is $T_2$-anonymous. Given a scenario $f$ and an element $t$ of $T_2$, we need to show that $P(f \circ t) = P(f)\circ t$. Fix an arbitrary agent $x$. We first claim that $[f \circ t]_x = [f]_{t(x)}$.  To see why, suppose $g$ is an element of $[f \circ t]_x$. Fix $t_g \in T_2$ so that $(f \circ t) \upharpoonright (-\infty, x) = (g \circ t_g) \upharpoonright (-\infty, x)$.  Then $f \upharpoonright (-\infty, t(x)) = (g \circ (t_g \circ t^{-1})) \upharpoonright (-\infty, t(x))$.  Since $t_g \circ t^{-1} \in T_2$, it follows that $g$ is in $[f]_{t(x)}$. The other inclusion has an almost identical proof. The fact that $[f \circ t]_x = [f]_{t(x)}$ implies that the two sets have the same $\prec$-least element: 
\begin{equation}\label{eq:sameguess}
g^{f \circ t}_x = g^f _{t(x)}. 
\end{equation}

Since $g^f_{t(x)}$ is in $[f]_{t(x)}$, we can choose some $t' \in T_2$ such that $f \upharpoonright (-\infty, t(x)) = (g^f_{t(x)} \circ t') \upharpoonright (-\infty, t(x))$, and therefore
\[
(f \circ t) \upharpoonright (-\infty, x) = (g^f_{t(x)} \circ t' \circ t) \upharpoonright (-\infty, x) = (g^{f \circ t}_x \circ (t' \circ t)) \upharpoonright (-\infty, x).
\]
Applying the definition of $P$ (equation \eqref{eq:defP}), we have
\[
P(f \circ t)(x) = (g^{f \circ t}_x \circ (t' \circ t))(x) = (g^f_{t(x)} \circ t')(t(x)) = (P(f) \circ t)(x).
\]
Since $x$ was arbitrary, it follows that $P(f \circ t)= P(f) \circ t$.

Finally, we must prove that with the predictor $P$, only countably many agents guess incorrectly.  Let $W$ be the set of agents who guess incorrectly; that is, $W = \{x \in \mathbb{R} : P(f)(x) \ne f(x)\}$.  Suppose that $x$ and $y$ are both in $W$ and $x<y$. We claim that $g^f_x \prec g^f _y$.  To prove this claim, we first show that $[f]_y \subseteq [f]_x$. Consider an arbitrary $g$ in $[f]_y$. Fix a $t$ in $T_2$ so that $f$ agrees with $g \circ t$ on $(-\infty, y)$. Since $x < y$, it follows trivially that $f$ agrees with $g \circ t$ on $(-\infty, x)$; hence, $g$ is in $[f]_x$.

Since $[f]_y \subseteq [f]_x$, the $\prec$-least element of $[f]_y$ greater than or equal to the $\prec$-least element of $[f]_x$. In other words, $g^f_x \preccurlyeq g^f_y$. We now claim that $g^f_x \neq g^f_y$. Suppose, toward contradiction, that $g^f_x = g^f_y$.  Choose a $t$ in $T_2$ such that $f$ agrees with $g^f_y \circ t = g^f_x \circ t$ on $(-\infty, y)$.  Then by the definition of $P$,
\[
P(f)(x) = (g^f_x \circ t)(x) = (g^f_y \circ t)(x) = f(x),
\]
where the last equal sign follows from the fact that $f$ and $g^f_y \circ t$ agree on $(-\infty, y)$ and $x < y $. However, this is a contradiction, since we assumed that $x$ guesses incorrectly. So $g^f_x \neq g^f_y$. Combining $g^f_x \preccurlyeq g^f_y$ and $g^f_x \neq g^f_y$, we have $g^f_y \prec g^f_y$.

Now, suppose $W$ has an infinite descending chain $\cdots < w_2 < w_1 < w_0$. By the claim we have just proven, this gives us an infinite descending chain $\cdots \prec g^f_{w_2} \prec g^f_{w_1} \prec g^f_{w_0}$ in ${}^{\mathbb{R}}S$, contradicting the fact that $\prec$ is a well ordering. Thus $W$ has no infinite descending chain, so it is well ordered, and therefore countable.
\end{proof}

\section{$T_3$-anonymity}
In this section, we construct a scenario in which agents using \emph{any} $T_3$-anonymous predictor perform poorly. Broadly speaking, this scenario ensures that every agent sees the same thing looking into the past, up to distortion by elements in $T_3$. The condition of $T_3$-anonymity then implies that every agent guesses the same thing.  The scenario takes each of its values only countably many times, so only countably many agents guess correctly.  We spend the rest of the section working out the details of this argument.

Central to the construction of this scenario will be functions that are \emph{smooth} (that is, infinitely differentiable at each point in their domain), but fail to be analytic at certain points.  For example the function
\[
h(x)=
  \begin{cases} 
      0, & \text{if $x \leq 0$,} \\
      e^{-1/x}, & \text{if $ x > 0$} \\
  \end{cases}
\]
is smooth everywhere, but fails to be analytic at 0.  Note in particular that for every positive integer $k$, $h^{(k)}(0) = 0$, where $h^{(k)}$ denotes the $k$th derivative of $h$.

We can use $h$ to define a \emph{smooth transition function} $s:[0,1] \to [0,1]$ as follows:
\begin{equation}\label{eq:sx}
s(x) = \frac{h(x)}{h(x) + h(1-x)}.
\end{equation}
The graph of $s$ transitions smoothly from being very flat at the point $(0,0)$ to being very flat at the point $(1,1)$ (see Figure \ref{fig:s}). More precisely, we have $s(0) = 0$, $s(1) = 1$, $s$ is increasing and smooth on $[0,1]$, and for every positive integer $k$, $s_+^{(k)}(0) = s_-^{(k)}(1) = 0$.  (The subscripts $+$ and $-$ here indicate that these are one-sided derivatives.)

\begin{figure}
\includegraphics{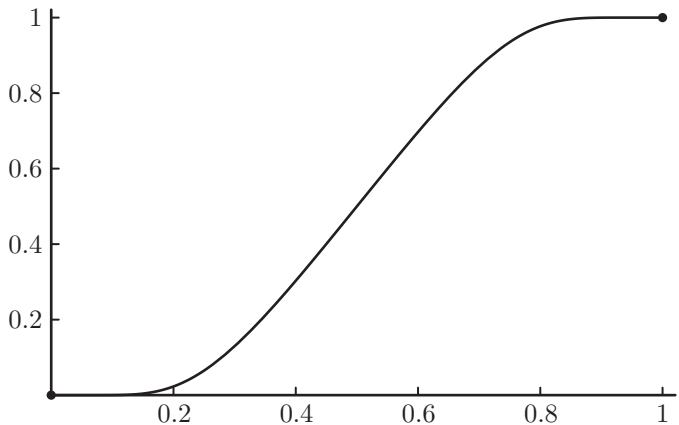}
\caption{The graph of $s$.}\label{fig:s}
\end{figure}

By shifting and rescaling $s$, we can define similar smooth transition functions connecting other points.  Let $A = (p_1, q_1)$ and $B=(p_2, q_2)$ be two points in $\mathbb{R}^2$, and suppose that $p_1 < p_2$ and $q_1 < q_2$; we say that $B$ is \emph{above and to the right of} $A$. We define the function $s_{AB}: [p_1, p_2] \to [q_1, q_2]$ by the equation
\[
s_{AB}(x) = (q_2 - q_1) \cdot s\!\left(\frac{x - p_1}{p_2 - p_1}\right) + q_1.
\]
The graph of $s_{AB}$ passes through the points $A$ and $B$, and $s_{AB}$ is increasing and smooth on $[p_1, p_2]$. When we use the notation $s_{AB}$, we implicitly assume that $B$ is above and to the right of $A$. Notice that since $s_{AB}$ is increasing, it is invertible. Also, by the chain rule, we have
\[
(s_{AB})'(x) = \frac{q_2-q_1}{p_2-p_1} \cdot s'\!\left(\frac{x-p_1}{p_2-p_1}\right),
\]
and using the chain rule $k$ times, we have
\begin{equation}\label{eq:sAB(k)}
(s_{AB})^{(k)}(x) = \frac{q_2-q_1}{(p_2-p_1)^k} \cdot s^{(k)}\!\left(\frac{x-p_1}{p_2-p_1}\right).
\end{equation}
In particular, at the endpoints of the domain we have the one-sided derivatives $(s_{AB})_+^{(k)}(p_1) = (s_{AB})_-^{(k)}(p_2) = 0$.  Thus, $s_{AB}$ is very flat at $A$ and $B$.

Before continuing, we note one reason that these smooth transitions functions will be useful to us.  Suppose that $A$, $B$, and $C$ are three points in the plane, with $C$ above and to the right of $B$, which is above and to the right of $A$.  If we concatenate $s_{AB}$ and $s_{BC}$ (in the obvious way), we obtain a smooth increasing function passing through all three of the points $A$, $B$, and $C$.  Repeating this reasoning, we can create a smooth increasing function passing through a sequence of points going up and to the right in $\mathbb{R}^2$.

We now use our smooth transition functions to define an equivalence relation on $\mathbb{R}$.  We will say that a point $(p,q)$ in $\mathbb{R}^2$ is a \emph{rational pair} if both $p$ and $q$ are rational numbers. Since the set of rational pairs is countable, the set
\[
F = \{s_{AB} : \text{$A$ and $B$ are both rational pairs}\}
\]
is countable as well.  For real numbers $x$ and $y$, we define $x \sim_F y$ to mean that there is some $f \in F$ such that either $f(x) = y$ or $f^{-1}(x) = y$.  We will say that a finite sequence of numbers $(x_1, x_2, \ldots, x_n)$ is an \emph{$F$-path of length $n$} from $x$ to $y$ if
\[
x = x_1 \sim_F x_2 \sim_F \cdots \sim_F x_n = y.
\]
Now we let $x \sim_F^* y$ mean that there is an $F$-path from $x$ to $y$.  For example, if $x \sim_F y$, then $(x, y)$ is an $F$-path of length 2 from $x$ to $y$, so $x \sim_F^* y$.  For any real number $x$, the one-term sequence $(x)$ is an $F$-path of length 1 from $x$ to $x$, and therefore $x \sim_F^* x$; in other words, $\sim_F^*$ is reflexive.

\begin{lemma}
The relation $\sim_F^*$ is an equivalence relation on $\mathbb{R}$.  Furthermore, for any real number $x$, the equivalence class $[x]_{\sim_F^*}$ is countable. 
\end{lemma}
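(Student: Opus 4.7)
The plan is to verify the three equivalence-relation axioms directly from the definition of $F$-path, and then bound the cardinality of each class via a breadth-first expansion, using that $F$ is countable.

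Reflexivity is already observed in the excerpt. For symmetry, I would first note that $\sim_F$ itself is symmetric: if $f \in F$ witnesses $x \sim_F y$ via $f(x) = y$, then $f^{-1}(y) = x$ witnesses $y \sim_F x$, and the other case is analogous. Hence reversing an $F$-path $(x_1, \ldots, x_n)$ from $x$ to $y$ yields the $F$-path $(x_n, \ldots, x_1)$ from $y$ to $x$. For transitivity, concatenation is immediate: given $F$-paths $(x_1, \ldots, x_n)$ from $x$ to $y$ and $(y_1, \ldots, y_m)$ from $y$ to $z$, with $x_n = y_1 = y$, the spliced sequence $(x_1, \ldots, x_n, y_2, \ldots, y_m)$ is an $F$-path from $x$ to $z$.

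For the countability claim, I would define $R_n(x)$ to be the set of right endpoints of $F$-paths of length at most $n$ beginning at $x$, so that $R_1(x) = \{x\}$ and
\[
R_{n+1}(x) = R_n(x) \cup \{y \in \mathbb{R} : \exists\, z \in R_n(x),\, \exists\, f \in F \text{ with } f(z) = y \text{ or } f^{-1}(z) = y\}.
\]
Since $F$ is countable and each $f \in F$ contributes at most the two points $f(z)$ and $f^{-1}(z)$ for each $z$ (and only when $z$ lies in the appropriate domain or range), induction shows that each $R_n(x)$ is countable. Then $[x]_{\sim_F^*} = \bigcup_{n \geq 1} R_n(x)$ is a countable union of countable sets, hence countable.

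There is no real obstacle here; the only mild subtlety is that the functions $s_{AB} \in F$ have restricted domains, so one should read the clauses ``$f(z) = y$'' and ``$f^{-1}(z) = y$'' as implicitly requiring $z$ to lie in the domain (respectively range) of $f$. This refinement does not affect the cardinality bound, since we are only ever using upper bounds on the number of neighbors of each point.
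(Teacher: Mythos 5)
Your proof is correct and follows essentially the same route as the paper: symmetry by reversing $F$-paths, transitivity by concatenation, and countability from the fact that $F$ is countable so each point has only countably many $\sim_F$-neighbors (your breadth-first sets $R_n(x)$ just make explicit the paper's count of $F$-paths starting at $x$). The remark about restricted domains is a fair point of care but changes nothing.
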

\begin{proof}
We have already seen that $\sim_F^*$ is reflexive.  Clearly $\sim_F$ is symmetric.  It follows that the reverse of an $F$-path is another $F$-path, and therefore $\sim_F^*$ is symmetric.  Transitivity of $\sim_F^*$ can be proven by concatenating $F$-paths.  To see that the equivalence classes are countable, observe that since $F$ is countable, for any $x \in \mathbb{R}$ there are only countably many $y$ such that $x \sim_F y$.  It follows that there are only countably many $F$-paths starting at $x$. 
\end{proof}

Next we state and prove an interesting lemma from analysis. This lemma will be one of the key ingredients in the proof of the main result of this section. 

\begin{lemma}\label{lem:smooth}
Suppose $f$ is smooth on $(-\infty, w)$ and continuous on $(-\infty, w]$. Suppose also that $\lim_{x \to w^{-}}f^{(k)}(x) = 0$ for every positive integer $k$. Then $f_-^{(k)}(w) = 0$ for all $k$. 
\end{lemma}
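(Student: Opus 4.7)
The plan is to proceed by induction on $k$, using the mean value theorem at each stage. The only subtlety is interpreting $f^{(k)}_-(w)$: at each level this should mean the left derivative at $w$ of the function $f^{(k-1)}$, where $f^{(k-1)}$ is assigned its previously established one-sided value at $w$ (this value is $f(w)$ for $k = 1$, and is $0$ for $k \geq 2$ by the induction hypothesis).

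For the base case $k = 1$, the function $f$ is continuous on $(-\infty, w]$ and differentiable on $(-\infty, w)$, so for any $x < w$ the mean value theorem provides some $c_x \in (x, w)$ with
\[
\frac{f(w) - f(x)}{w - x} = f'(c_x).
\]
As $x \to w^-$ we also have $c_x \to w^-$, and by hypothesis $f'(c_x) \to 0$. Hence $f^{(1)}_-(w) = 0$.

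For the inductive step, suppose we have already shown $f^{(j)}_-(w) = 0$ for $j = 1, \ldots, k$. Define $g$ on $(-\infty, w]$ by $g(x) = f^{(k)}(x)$ for $x < w$ and $g(w) = 0$. Smoothness of $f$ on $(-\infty, w)$ makes $g$ continuous and differentiable on that interval, with $g' = f^{(k+1)}$, and the assumption $\lim_{x \to w^-} f^{(k)}(x) = 0 = g(w)$ makes $g$ continuous at $w$ from the left. Applying the mean value theorem to $g$ on each interval $[x, w]$, we obtain $c_x \in (x, w)$ with
\[
\frac{g(w) - g(x)}{w - x} = f^{(k+1)}(c_x),
\]
and letting $x \to w^-$ yields $0$ in the limit. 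This is exactly the statement that $f^{(k+1)}_-(w) = 0$, closing the induction.

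The main obstacle is purely organizational: one must keep clear what value is assigned to $f^{(k)}$ at the point $w$ so that the next iterated one-sided derivative has a well-defined meaning. Once the convention is fixed using the induction hypothesis (so that $g$ above is genuinely continuous on $(-\infty, w]$), each inductive step becomes a clean MVT argument, and no deeper analytic input is needed.
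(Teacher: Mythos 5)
Your proof is correct and follows essentially the same route as the paper: apply the mean value theorem on $[x,w]$, let $x \to w^-$ so the difference quotient tends to the limit of the derivative, and iterate by induction. The paper compresses the inductive step into ``the same reasoning can be applied to $f'$,'' while you make explicit the (correct) convention of assigning $f^{(k)}$ the value $0$ at $w$ before differentiating again; this is a matter of added care, not a different argument.
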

\begin{proof}
For each $x < w$, use the mean value theorem to fix a $c_x$ so that $f(w) - f(x) = f'(c_x)(w-x)$. Rearranging this equation and taking the limit of both sides yields
\[
\lim_{x \to w^{-}}\frac{f(w) - f(x)}{w-x} = \lim_{x \to w^{-}}f'(c_x).
\]
The left side of this equation is the definition of $f_-'(w)$, and the right side equals $0$ by hypothesis, so $f_-'(w) = 0$. But now the same reasoning can be applied to $f'$, yielding $f_-''(w) = 0$.  Since we can continue to repeat this reasoning, the lemma follows by induction. 
\end{proof}

We are now ready to state and prove the main result of this section.

\begin{theorem}\label{thm:main2}
There is a set $S$ and a scenario $f \in {}^{\mathbb{R}}S$ such that for every $T_3$-anonymous predictor $P$, $\{x \in \mathbb{R} : P(f)(x) = f(x)\}$ is countable.  In other words, only countably many agents guess correctly.
\end{theorem}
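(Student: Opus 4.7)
The plan is to let $S$ be the set of equivalence classes of $\sim_F^*$ and define $f : \mathbb{R} \to S$ by $f(x) = [x]_{\sim_F^*}$. Since each equivalence class is countable, every value of $f$ is attained at only countably many points, so it will suffice to show that every $T_3$-anonymous predictor $P$ makes $P(f)$ a constant function on $\mathbb{R}$: the constant value will be a single equivalence class $E$, and agent $x$ will guess correctly iff $x \in E$, which is countable.

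The heart of the proof is the claim that for any two real numbers $x$ and $y$ with $x \ne y$, there is a $t \in T_3$ with $t(x) = y$ and $f(z) = f(t(z))$ for every $z \ne x$. Granting this, fix a $T_3$-anonymous $P$ and any $x < y$: the associated $t$ satisfies $f \upharpoonright (-\infty, x) = (f \circ t) \upharpoonright (-\infty, x)$, so the defining property of a predictor gives $P(f)(x) = P(f \circ t)(x)$, and $T_3$-anonymity gives $P(f \circ t)(x) = (P(f) \circ t)(x) = P(f)(y)$. Hence $P(f)$ is constant on $\mathbb{R}$.

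To construct $t$, I would pick bi-infinite strictly increasing rational sequences $(a_n)_{n \in \mathbb{Z}}$ and $(b_n)_{n \in \mathbb{Z}}$ with $a_n$ running from $-\infty$ to $x$ and $b_n$ running from $-\infty$ to $y$, together with an analogous pair running from $x$ to $+\infty$ and from $y$ to $+\infty$. Setting $A_n = (a_n, b_n)$, define $t$ on each interval $[a_n, a_{n+1}]$ to be the smooth transition $s_{A_n A_{n+1}}$. Since both endpoints are rational pairs, $s_{A_n A_{n+1}} \in F$, so every $z$ in this interval satisfies $z \sim_F t(z)$ and hence $f(t(z)) = f(z)$. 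Formula \eqref{eq:sAB(k)} shows that all derivatives of $s_{A_n A_{n+1}}$ vanish at its endpoints, so the pieces glue to a $C^\infty$ function on $\mathbb{R} \setminus \{x\}$; the same formula bounds $|t^{(k)}(z)| \le M_k (b_{n+1} - b_n)/(a_{n+1} - a_n)^k$ on $[a_n, a_{n+1}]$, where $M_k = \sup_{[0,1]} |s^{(k)}|$. By selecting the rationals so that $(b_{n+1} - b_n)/(a_{n+1} - a_n)^k \to 0$ for every $k$ (for instance with $a_{n+1} - a_n$ decaying like $2^{-n}$ and $b_{n+1} - b_n$ like $2^{-n^2}$), Lemma \ref{lem:smooth} gives $t_-^{(k)}(x) = 0$ for every $k$, and a symmetric argument on the right gives $t_+^{(k)}(x) = 0$. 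Thus $t$ is $C^\infty$ at $x$; since it is also strictly increasing with the correct behavior at $\pm\infty$, we have $t \in T_3$.

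The main obstacle is this smoothness argument at the accumulation point $x$: one must simultaneously ensure that the $a_n$ and $b_n$ are rational, that the transition pieces glue smoothly at each $a_n$, and that every derivative of the resulting $t$ tends to zero as $z \to x$ from both sides. Lemma \ref{lem:smooth} together with a careful choice in which $b_{n+1} - b_n$ decays much faster than any fixed power of $a_{n+1} - a_n$ is exactly what overcomes this; once $t$ is in hand, the rest of the proof is a direct application of $T_3$-anonymity and the predictor property.
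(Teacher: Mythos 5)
Your proposal is correct and takes essentially the same route as the paper: the same quotient scenario $f(x) = [x]_{\sim_F^*}$, the same reduction to showing $P(f)$ is constant, and the same construction of $t \in T_3$ by concatenating rational-pair transition functions $s_{A_nA_{n+1}} \in F$ with gaps chosen so that, via Lemma \ref{lem:smooth}, all derivatives vanish at the accumulation point. The only cosmetic differences are that the paper anchors the pieces to the right of $w$ at the points $(w+i, z+i)$, making right-smoothness at $w$ immediate rather than requiring your mirrored argument, and it makes your ``careful choice'' precise by picking the $q_i$ after the $p_i$ with $z - q_i < (p_{i+1}-p_i)^i/i$ (your fixed $2^{-n}$, $2^{-n^2}$ example is only indicative, since the ratio must be controlled in terms of the actual realized gaps).
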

\begin{proof}
Let $S = \mathbb{R}/{\sim_F^*} = \{[x]_{\sim_F^*} : x \in \mathbb{R} \}$, and let $f : \mathbb{R} \to S$ be defined by the equation
\[
f(x) = [x]_{\sim_F^*}.
\]
Thus, the state of the scenario $f$ at time $x$ is $[x]_{\sim_F^*}$. Suppose $P$ is a $T_3$-anonymous predictor.  We claim that using this predictor, all agents make the same guess in the scenario $f$.  Assuming this claim, we can let $[x]_{\sim_F^*} \in S$ be the common guess made by all agents.  Then for every $y \in \mathbb{R}$,
\begin{align*}
y \text{ guesses correctly} &\Leftrightarrow P(f)(y) = f(y)\\
&\Leftrightarrow [x]_{\sim_F^*} = [y]_{\sim_F^*}\\
&\Leftrightarrow y \in [x]_{\sim_F^*}.
\end{align*}
In other words, the countably many agents in the equivalence class $[x]_{\sim_F^*}$ guess correctly, and everyone else guesses incorrectly.  Thus, proving this claim will suffice to prove the theorem.

To show that all agents make the same guess, let $w$ and $z$ be arbitrary agents. We will prove that $P(f)(w) = P(f)(z)$ by showing that there is a function $t : \mathbb{R} \to \mathbb{R}$ with the following properties:
\begin{itemize}
\item $t \in T_3$
\item $t(w) = z$
\item $f \upharpoonright (-\infty, w) = (f \circ t) \upharpoonright (-\infty, w)$.
\end{itemize}
If we can find a function $t$ with these properties, then we can conclude that
\begin{align*}
P(f)(w) &= P(f \circ t)(w) & & \text{(since $f$ agrees with $f \circ t$ on $(-\infty, w)$)}\\
&= (P(f) \circ t)(w) & & \text{(since $t$ is in $T_3$, and $P$ is $T_3$-anonymous)}\\
&= P(f)(z) & & \text{(since $t(w) = z$).}
\end{align*}

To construct $t$, let $(p_i)$ be an increasing sequence of rational numbers converging to $w$ with $p_0 > w - 1$.  Let $(q_i)$ to be an increasing sequence of rationals converging to $z$ with the property that for every $i > 0$,
\[
z - q_i  < \frac{(p_{i+1} - p_i)^i}{i}.
\]
Notice that for every $i > 0$ we have $p_{i+1} - p_i < w - p_0 < 1$ and
\[
q_{i+1} - q_i < z - q_i < \frac{(p_{i+1} - p_i)^i}{i},
\]
and therefore
\begin{equation}\label{eq:pqbounds}
\frac{q_{i+1} - q_i}{(p_{i+1} - p_i)^i} < \frac{1}{i}.
\end{equation}

For each negative integer $i$, let $p_i = p_0 + i$ and $q_i = q_0 + i$.  For every integer $i$, let $A_i = (p_i, q_i)$, and for $i \ge 0$ let $B_i = (w+i, z+i)$.
We claim that the following function $t$ has the properties that we want:
\[
t(x)=
  \begin{cases} 
     s_{A_i A_{i+1}}(x), & \text{if $x \in [p_i, p_{i+1}]$,} \\
     s_{B_i B_{i+1}}(x), & \text{if $ x \in [w+i, w+i+1]$, where $i \ge 0$.} \\
  \end{cases}
\]

It is not hard to see that $t$ is an increasing bijection from $\mathbb{R}$ to $\mathbb{R}$; therefore it is continuous on $\mathbb{R}$.  By our earlier comment about concatenating smooth transition functions, $t$ is smooth everywhere except perhaps at $w$.  It is also clear that for every positive integer $k$, $t_+^{(k)}(w) = (s_{B_0B_1})_+^{(k)}(w) = 0$.  If we can show that $t_-^{(k)}(w) = 0$ as well, then it will follow that $t$ is smooth at $w$, and therefore $t \in T_3$.  To do this, by Lemma \ref{lem:smooth}, it will suffice to show that $\lim_{x \to w^-} t^{(k)}(x) = 0$.

Fix a positive integer $k$, and suppose $i \ge k$.  According to \eqref{eq:sAB(k)},
\[
(s_{A_iA_{i+1}})^{(k)}(x) = \frac{q_{i+1}-q_i}{(p_{i+1}-p_i)^k} \cdot s^{(k)}\!\left(\frac{x-p_i}{p_{i+1}-p_i}\right),
\]
where $s$ is the function defined in \eqref{eq:sx}. Since $s$ is smooth, each of its derivatives is continuous. Using the extreme value theorem, we let $M_k$ be the maximum value of $|s^{(k)}(x)|$ for $x \in [0,1]$. Then for each $x$ in $[p_i, p_{i+1}]$, we have
\begin{equation}\label{eq:sAiAi+1(k)}
\left|(s_{A_iA_{i+1}})^{(k)}(x)\right| \leq \frac{q_{i+1}-q_i}{(p_{i+1}-p_i)^k} \cdot M_k.
\end{equation}
Using the inequalities $p_{i+1} - p_i < 1$ and $i \ge k$, and then applying $\eqref{eq:pqbounds}$, we have
\begin{equation}\label{eq:coeffbounds}
0 < \frac{q_{i+1} - q_i}{(p_{i+1} - p_i)^k} \le \frac{q_{i+1} - q_i}{(p_{i+1} - p_i)^i} < \frac{1}{i}. 
\end{equation}
Combining \eqref{eq:sAiAi+1(k)} and \eqref{eq:coeffbounds}, we conclude that if $i \ge k$ and $x \in [p_i, p_{i+1}]$, then
\begin{equation}\label{eq:sAiAi+1bound}
\left|(s_{A_iA_{i+1}})^{(k)}(x)\right| < \frac{M_k}{i}.
\end{equation}

Now, to show that $\lim_{x \to w^-} t^{(k)}(x) = 0$, suppose $\epsilon > 0$. Fix $I \ge k$ large enough that $M_k/I < \epsilon$.  Now suppose $p_I < x < w$.  Then $x \in [p_i,p_{i+1}]$ for some $i \ge I$. By \eqref{eq:sAiAi+1bound}, 
\[
\left|t^{(k)}(x)\right| = \left|(s_{A_iA_{i+1}})^{(k)}(x)\right| < \frac{M_k}{i} \leq \frac{M_k}{I}< \epsilon,
\]
as required.  Therefore, $t$ is an element of $T_3$.

Since the graph of $t$ passes through $B_0 = (w,z)$, $t(w) = z$.  All that remains is to show that $f$ agrees with $f \circ t$ on $(-\infty, w)$. Suppose $x < w$.  Then $x \in [p_n, p_{n+1}]$ for some integer $n$, and $t(x) = s_{A_nA_{n+1}}(x)$.  Note that $s_{A_n A_{n+1}}(x) \sim_F^* x$,  since $s_{A_n A_{n+1}} \in F$. Therefore
\begin{align*}
(f \circ t) (x) &= f(t(x)) = f(s_{A_n A_{n+1}}(x)) & &\text{(by the definition of $t$)} \\
&= [s_{A_n A_{n+1}}(x)]_{\sim_F^*} & & \text{(by the definition of $f$)}\\
&= [x]_{\sim_F^*} & & \text{(since $s_{A_n A_{n+1}}(x) \sim_F^* x$)}\\
& = f(x) & & \text{(by the definition of $f$).}
\end{align*}

Thus, $t$ satisfies the three conditions we needed.  This completes the proof of our claim that all agents make the same guess, and therefore the proof of the theorem.
\end{proof}

There is plenty to discover about $T$-anonymous predictors when $T_2 \subseteq T \subseteq T_3$. In particular, what if $T$ consists of increasing polynomials or, alternatively, increasing analytic functions? What if $T$ consists of increasing computable functions? At what point do agents start to perform poorly?

\end{document}